\newtheorem{theorem}{Theorem}[section]
\newtheorem{lemma}[theorem]{Lemma}
\theoremstyle{definition}
\theoremstyle{remark}
\numberwithin{equation}{section}
\DeclareMathAlphabet{\mathsl}{OT1}{cmss}{m}{sl}
\SetMathAlphabet{\mathsl}{bold}{OT1}{cmss}{bx}{sl}
\def\TH(#1){\label{#1}}\def\thv(#1){\ref{#1}}
\def\Eq(#1){\label{#1}}\def\eqv(#1){(\ref{#1})}
\newcommand{\D}{\mathbbm{D}}
\newcommand{\E}{\mathbbm{E}}
\newcommand{\N}{\mathbbm{N}}
\newcommand{\R}{\mathbbm{R}}
\renewcommand{\P}{\mathbbm{P}}
\newcommand{\eps}{\varepsilon}
\DeclareMathOperator{\Var}{Var}
\newcommand{\aryb}{\begin{eqnarray*}}
\newcommand{\arye}{\end{eqnarray*}}
\def\alb#1\ale{\begin{align*}#1\end{align*}}
\newcommand{\eqb}{\begin{equation}}
\newcommand{\eqe}{\end{equation}}
\newcommand{\eqbn}{\begin{equation*}}
\newcommand{\eqen}{\end{equation*}}
\newcommand{\BB}{\mathbbm}
\newcommand{\op}{\operatorname}
\newcommand{\rta}{\rightarrow}
\newcommand{\wt}{\widetilde}
\newcommand{\wh}{\widehat}
\def\y{{\bf y}}
\begin{document}

\title[Trace reconstruction with varying deletion probabilities]{Trace reconstruction with varying deletion probabilities}


\author{Lisa Hartung}
\address{Courant Institute of Mathematical Sciences\\ New York University, New York, NY 10012} \email{hartung@cims.nyu.edu}

\author{Nina Holden}
\address{Department of Mathematics\\
	Massachusetts Institute of Technology\\
	Cambridge, MA 02139} \email{ninah@mit.edu}

\author{Yuval Peres}
\address{Microsoft Research\\
	Redmond, WA 98052} 
\email{peres@microsoft.com}


 \date{\today}

\begin{abstract}
	In the trace reconstruction problem an unknown string ${\bf x}=(x_0,\dots,x_{n-1})\in\{0,1,...,m-1\}^n$ is observed through the deletion channel, which deletes each $x_k$ with a certain probability, yielding a contracted string $\wt {\bf X}$. Earlier works have proved that if each $x_k$ is deleted with the same probability $q\in[0,1)$, then $\exp(O(n^{1/3}))$ independent copies of the contracted string $\wt {\bf X}$ suffice to reconstruct ${\bf x}$ with high probability. We extend this upper bound to the setting where the deletion probabilities vary, assuming certain regularity conditions. First we consider the case where $x_k$ is deleted with some known probability $q_k$. Then we consider the case where each letter $\zeta\in \{0,1,...,m-1\}$ is associated with some possibly unknown deletion probability $q_\zeta$.
\end{abstract}

\maketitle

\section{Introduction}
Let $n\in\N$, $m\in\{2,3,\dots \}$, and $[m]=\{0,\dots,m-1 \}$.
In trace reconstruction the goal is to reconstruct an unknown string ${\bf x}=(x_0,\dots,x_{n-1}) \in [m]^n$ from noisy observations of ${\bf x}$. Here we study the case where the data is noisy due to a deletion channel in which each bit is deleted independently with a certain probability. In other words, instead of observing ${\bf x}$ we observe many independent strings ${\bf\widetilde X}$ obtained by sending $\bf x$ through the deletion channel. The probability of deleting $x_k$ might depend on either (I) the location $k$ of $x_k$ in the string, or (II) the letter $x_k$. 

In Case (I) and given $p_k\in(0,1]$ for $k\in[n]$, the string ${\bf\widetilde X}$ is obtained in the following way for $k=0, 1, \dots, n-1$, starting from an empty string.
\begin{itemize}
\item (retention) With probability $p_k$, copy $x_k$ to the end of ${\bf \widetilde X}$ and increase $k$ by one.
\item (deletion) With probability $q_k=1-p_k$, increase $k$ by one.
\end{itemize}
In Case (II) and given $p_\zeta\in(0,1]$ for $\zeta\in[m]$, the string ${\bf\widetilde X}$ is obtained by performing the following steps for $k=0, 1, \dots, n-1$.
\begin{itemize}
	\item (retention) With probability $p_{x_k}$, copy $x_k$ to the end of ${\bf \widetilde X}$ and increase $k$ by one.
	\item (deletion) With probability $q_{x_k}=1-p_{x_k}$, increase $k$ by one.
\end{itemize}
In Case (I) we assume the deletion probabilities are known, while we do not make this assumption in Case (II).

For $T\in\N$ we consider $T$ independent outputs  ${\bf{\widetilde X}}^{(1)},\dots,{\bf{\widetilde X}}^{(T)}$ (called ``traces'') from the deletion channel. Our main question is the following. Given $\eps>0$, how many samples are needed, such that for some ${\bf\widehat X}\in[m]^n$ we have $\P[{\bf\widehat X}={\bf x} ]>1-\eps$? Holenstein, Mitzenmacher, Panigrahy, and Wieder \cite{HMPW08} proved that $\exp\left(n^{1/2}\op{polylog}(n)\right)$ traces are sufficient in the case where all $x_k$ are deleted independently with the same probability $q\in[0,1)$. See \cite{mpv14} for an alternative proof. This result was recently improved by De, O'Donnell, and Servedio \cite{DOS16}, and by Nazarov and Peres \cite{NaPe16}. Using single bit statistics for the traces, they proved that reconstruction is possible with $\exp\left(O(n^{1/3})\right)$ traces, and that this is optimal for reconstruction techniques using only single bit statistics. The case of random strings was studied in \cite{BKKM04} and \cite{PZ17}. 

\subsection{Main result}
We study the following two settings in Case (I) where the deletion probabilities depend on the location in the string.
\begin{itemize}
\item[(i)] (weak monotonicity) For some $\delta\in(0,1)$ the retention probabilities $(p_k)_{k\in \mathbb{N}}$ satisfy $p_\ell>\frac{p_k}{2}+\delta$  for all $k>\ell$, and $p_k>\delta$ for all $k>0$. 
\item[(ii)] (periodicity) The deletion probabilities are $2$-periodic, meaning that 
\begin{equation}\label{cond.3}
q_k=q \mbox{ for }k \mbox{ even}, \quad q_k= \wt q \mbox{ for }k \mbox{ odd}.
\end{equation}
\end{itemize}
In particular, (i) covers the case where the retention probabilities are monotonically decreasing and bounded away from zero, and the case where all retention probabilities are in some interval $[p+\delta,2p]$ for $p\in(0,1/2]$ and $\delta>0$. Also observe that, by reversing the sequence, we can also study strings where the deletion probabilities satisfy $p_\ell>\frac{p_j}{2}+\delta$  for all $\ell>j$ (instead of $\ell<j$).

\begin{theorem}\label{thm.main}
	Let $\eps>0$ and $m\in\{2,3,\dots \}$, and let the deletion probabilities be known and satisfy Assumption (i) or (ii).	There exists a constant $C>0$ depending only on $\eps,m$, and $\delta$ (for Assumption (i)), and $\eps,m,q,$ and $\wt q$ (for Assumption (ii)), such that the original string ${\bf x}\in[m]^n$, can be reconstructed with probability at least $1-\eps$ from $T=\lceil\exp\left(Cn^{1/3}\right)\rceil$ i.i.d.\ samples of the deletion channel applied to ${\bf x}$.
\end{theorem}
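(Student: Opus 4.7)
The plan is to follow the single-letter statistics framework of De--O'Donnell--Servedio and Nazarov--Peres, adapted to varying retention probabilities. For $\bfx\in[m]^n$, $j\in\N$, and $\zeta\in[m]$, set $\alpha_j^\zeta(\bfx):=\P[\widetilde X_j=\zeta]$. A standard Chernoff-plus-union-bound argument (applied to the empirical estimates of each $\alpha_j^\zeta$ over the $nm$ pairs $(j,\zeta)$) reduces Theorem~\ref{thm.main} to the separation estimate: for every $\bfx\neq\bfx'\in[m]^n$ there exist $j,\zeta$ with
\begin{equation*}
\bigl|\alpha_j^\zeta(\bfx)-\alpha_j^\zeta(\bfx')\bigr|\ \geq\ \exp(-Cn^{1/3}).
\end{equation*}
Unwinding the definition of the deletion channel gives the generating function
\begin{equation*}
A_\zeta(w;\bfx)\ :=\ \sum_{j\geq 0}\alpha_j^\zeta(\bfx)\,w^j\ =\ \sum_{k=0}^{n-1}\mathbbm{1}[x_k=\zeta]\,p_k\prod_{i=0}^{k-1}(q_i+p_iw),
\end{equation*}
by the observation that the $j$-th retained position equals $k$ iff bit $k$ is kept and exactly $j$ of the bits $0,\ldots,k-1$ are kept. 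Writing $D_\zeta(w):=A_\zeta(w;\bfx)-A_\zeta(w;\bfx')=\sum_k c_k^\zeta p_k\prod_{i<k}(q_i+p_iw)$ with $c_k^\zeta:=\mathbbm{1}[x_k=\zeta]-\mathbbm{1}[x_k'=\zeta]\in\{-1,0,1\}$ and using $\max_{|w|=1}|D_\zeta(w)|\leq n\max_j|\alpha_j^\zeta(\bfx)-\alpha_j^\zeta(\bfx')|$, it suffices to prove $\max_\zeta\max_{|w|=1}|D_\zeta(w)|\geq\exp(-Cn^{1/3})$.

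For Case~(ii), periodicity affords a clean change of variables. With $p=1-q$, $\widetilde p=1-\widetilde q$, and $u(w):=(q+pw)(\widetilde q+\widetilde p w)$, the product $\prod_{i<k}(q_i+p_iw)$ equals $u(w)^{k/2}$ for $k$ even and $(q+pw)\,u(w)^{(k-1)/2}$ for $k$ odd, yielding
\begin{equation*}
D_\zeta(w)\ =\ p\,E_\zeta(u(w))+\widetilde p\,(q+pw)\,O_\zeta(u(w)),
\end{equation*}
where $E_\zeta,O_\zeta$ are Littlewood polynomials of degree $\lesssim n/2$ with coefficients the even- and odd-indexed $c_k^\zeta$. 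Since $\bfx\neq\bfx'$, at least one of $E_\zeta,O_\zeta$ is not identically zero. The image curve $\Gamma:=u(\{|w|=1\})$ contains an arc through $u(1)=1$, and a Nazarov--Peres arc lower bound for Littlewood polynomials applied to $E_\zeta$ or $O_\zeta$ along $\Gamma$ produces $u_0\in\Gamma$ with $|E_\zeta(u_0)|+|O_\zeta(u_0)|\geq\exp(-Cn^{1/3})$. Since $w\mapsto u(w)$ is two-to-one from the unit circle onto $\Gamma$ away from the (at most two) critical values, inverting the $2\times 2$ linear system relating $(D_\zeta(w_1),D_\zeta(w_2))$ to $(E_\zeta(u_0),O_\zeta(u_0))$ at the two preimages $w_1\neq w_2$ of such a $u_0$ transfers the lower bound to $D_\zeta$.

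Case~(i) offers no such change of variables; I would instead adapt the arc estimate directly, working on $A_\eta:=\{e^{i\theta}:|\theta|\leq\eta\}$ with $\eta=cn^{-1/3}$. From $|q_i+p_ie^{i\theta}|^2=1-2p_iq_i(1-\cos\theta)$ one gets $\prod_{i<k}|q_i+p_iw|\geq\exp(-Cn\eta^2)=\exp(-Cn^{1/3})$ uniformly in $k\leq n$ (and a matching upper bound up to constants). Setting $k_0:=\min\{k:x_k\neq x_k'\}$ and factoring out $P_{k_0}(w):=\prod_{i<k_0}(q_i+p_iw)$ gives $D_\zeta(w)=P_{k_0}(w)\,R_\zeta(w)$ with $R_\zeta(w):=\sum_{k\geq k_0}c_k^\zeta p_k\prod_{i=k_0}^{k-1}(q_i+p_iw)$; for $\zeta\in\{x_{k_0},x_{k_0}'\}$ the $k=k_0$ summand of $R_\zeta$ has absolute value $p_{k_0}\geq\delta$. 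The weak monotonicity hypothesis $p_\ell>p_k/2+\delta$ for $\ell<k$ (with $p_k>\delta$) is then used to compare the factors $(q_i+p_iw)$ across indices, reducing the estimate of $\max_{A_\eta}|R_\zeta|$ from below to a Borwein--Erd\'elyi--K\'os type arc lower bound for an auxiliary Littlewood polynomial.

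The main obstacle is this last arc estimate for Case~(i). In the constant-$p$ case one substitutes $z=q+pw$ to reduce to an honest Littlewood polynomial in $z$ of degree $\leq n-1$, then applies the classical lower bound on the maximum modulus of such polynomials along an arc of the circle $\{|z-q|=p\}$. With varying $p_i$ this substitution is unavailable, and the weak monotonicity hypothesis is tailored precisely to prevent late factors $(q_i+p_iw)$ from being much smaller than early ones in a way that would destroy the effective Littlewood structure of $R_\zeta$. Constructing an auxiliary Littlewood polynomial whose arc-norm controls $\max_{A_\eta}|R_\zeta|$ with only a loss of $\exp(O(n^{1/3}))$, and verifying that the weak monotonicity quantitatively furnishes this comparison, is where I expect the bulk of the technical work to lie.
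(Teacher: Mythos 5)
Your global framework (single-letter statistics, the generating-function identity, Hoeffding plus a union bound over pairs, and the reduction from general $m$ to $m=2$ via indicator strings) matches the paper, but both core analytic steps have problems. For Case (ii), the pivotal claim that $w\mapsto u(w)=(q+pw)(\wt q+\wt p w)$ is two-to-one from the unit circle onto $\Gamma$ is false: if $u(w_1)=u(w_2)=u_0$ then $w_1+w_2=-(p\wt q+\wt p q)/(p\wt p)$, so when $w_1$ lies on the small arc near $1$ (which is where you need $u_0\approx u(1)=1$), the second preimage satisfies $|w_2|=(p+\wt p q)/(p\wt p)=1/\wt p+q/p>1$, strictly outside the closed unit disk (e.g.\ $|w_2|=3$ for $p=\wt p=1/2$). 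A lower bound on $|D_\zeta(w_2)|$ at such a point gives no control on $\max_j|\alpha_j^\zeta(\bfx)-\alpha_j^\zeta(\bfx')|$, because the trivial upper bound there carries a factor $|w_2|^n$; and without the second preimage you cannot rule out cancellation between $pE_\zeta(u_0)$ and $\wt p(q+pw_1)O_\zeta(u_0)$ at the single available point. (Separately, the ``Nazarov--Peres arc bound along $\Gamma$'' is not an off-the-shelf statement, since $\Gamma$ is not an arc of the unit circle; it would itself require a harmonic-measure argument.) The paper avoids all of this by using the one-to-one affine substitution $z=\wt p w+\wt q$, locating an interior point where the transformed series is bounded below by a constant (at $z=0$, or near $z=-1$ in the degenerate case $\wt p=2p$, after a short case analysis on $p_0$ and $a_1$), and then transferring this to the arc by subharmonicity of $\log|\wt\Psi|$ in a disk internally tangent to $\partial\D$ at $1$.

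For Case (i), you have correctly set up the factorization at $k_0$ and the estimate $\prod_{i<k_0}|q_i+p_iw|\geq\exp(-Cn/L^2)$ on the short arc, but the heart of the matter --- the lower bound $\max_{\gamma_L}|R_\zeta|\geq e^{-cL}$ --- is exactly the step you defer (``where I expect the bulk of the technical work to lie''), so the proposal does not prove the theorem in this case. Moreover, the route you sketch (comparing factors across indices to manufacture an auxiliary Littlewood polynomial) is not how the hypothesis is actually used, and it is unclear it can be carried out. The paper's mechanism is different and simpler: evaluate $R_\zeta$ (the paper's $\Psi$, after normalizing $a_{k^*}=\pm1$) at the interior point $w_0=-q_{k^*}/p_{k^*}$, where every term with $k>k^*$ contains the vanishing factor $p_{k^*}w_0+q_{k^*}$, so $|R_\zeta(w_0)|=p_{k^*}\geq\delta$ exactly; the weak monotonicity assumption $p_{k^*}>p_k/2+\delta$ is what guarantees that $|p_kw_0+q_k|=|1-p_k/p_{k^*}|\leq 1-\delta'$ for all later $k$, so that one can choose a domain $\Omega_L$ joining $w_0$ to the arc $\gamma_L$ on which $|R_\zeta|$ is controlled by a convergent geometric series, and then conclude by subharmonicity of $\log|R_\zeta|$ together with the harmonic-measure lower bound $\mu^{w_0}_{\Omega_L}(\gamma_L)\gtrsim 1/L$ and a logarithmic-integrability estimate near the tangency with $\partial\D$. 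Supplying this (or an equivalent) argument is necessary to close the gap; as written, both cases of your proof are incomplete, and Case (ii) as sketched would not work even if the deferred arc estimate were granted.
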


For case (II), where the deletion probabilities vary by letter, we prove the following.
\begin{theorem}
	For any $\delta,\eps>0$ and $m\in\{2,3,\dots \}$ there is a constant $C>0$ such that if $\min_{\zeta\in[m]} p_\zeta\geq\delta$, then the string ${\bf x}\in[m]^n$ can be reconstructed with probability at least $1-\eps$ from $T=\lceil\exp\left(Cn^{1/3}\right)\rceil$ i.i.d.\ samples of the deletion channel applied to $\bf x$. This result holds also in the case when the deletion probabilities are unknown.
	\label{thm1}
\end{theorem}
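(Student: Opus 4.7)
\emph{Proof proposal.} The plan is to reduce Case (II) to the uniform-deletion setting and then invoke the $\exp(O(n^{1/3}))$ reconstruction bound of \cite{DOS16,NaPe16}, extended from binary to alphabet $[m]$ by running the binary algorithm separately on each letter-indicator string $(\mathbf{1}_{x_k=\zeta})_{k}$ (under uniform deletion, the $\zeta$-indicator of a trace is itself a uniform-deletion trace of $(\mathbf{1}_{x_k=\zeta})_{k}$ at the same retention). Concretely, I would first estimate the letter-wise retention probabilities $p_\zeta$ to stretched-exponential accuracy, and then use those estimates to post-process each trace by a letter-dependent extra deletion that equalizes the overall per-position retention.

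For the estimation step, observe that the count $N_\zeta^{(t)}:=|\{j:\wt X^{(t)}_j=\zeta\}|$ of $\zeta$'s in the $t$-th trace is distributed as $\mathrm{Binomial}(n_\zeta,p_\zeta)$ with $n_\zeta:=|\{k:x_k=\zeta\}|$, independently across $t$. The binomial law is identifiable from its first two moments via $n_\zeta=\mu^2/(\mu-\sigma^2)$ and $p_\zeta=(\mu-\sigma^2)/\mu$, so from $T_1:=\lceil\exp(C_1 n^{1/3})\rceil$ traces, Hoeffding's inequality applied to $N_\zeta^{(t)}$ and $(N_\zeta^{(t)})^2$ (each bounded by $n^2$) yields estimators $\hat n_\zeta$ (obtained by rounding to the nearest integer) and $\hat p_\zeta$ satisfying $\hat n_\zeta=n_\zeta$ and $|\hat p_\zeta-p_\zeta|\le\exp(-C_2 n^{1/3})$ simultaneously for all $\zeta\in[m]$, with probability $\ge 1-\eps/3$, provided $C_1$ is chosen large enough in terms of $C_2,\delta,m,\eps$. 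Letters absent from $\bfx$ produce $\hat m_\zeta=0$ and are discarded; for $n_\zeta\ge 1$, the bound $\mathbb{E}[N_\zeta]\ge\delta$ keeps the division by $\hat m_\zeta$ harmless.

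Given the estimates, set $\hat\delta:=\min_\zeta\hat p_\zeta\in[\delta/2,1]$ and, on the remaining $T_2:=\lceil\exp(C_3 n^{1/3})\rceil$ traces, independently retain each displayed $\zeta$-letter with probability $\hat\delta/\hat p_\zeta$ (otherwise delete it). Each original position $k$ is then present in the post-processed trace with independent probability $p_{x_k}\hat\delta/\hat p_{x_k}$; if the estimates were exact this would equal $\hat\delta$ uniformly in $k$, so the post-processed trace would be a genuine uniform-deletion trace of $\bfx$ at retention $\hat\delta$. In reality each per-coordinate Bernoulli differs from $\mathrm{Bernoulli}(\hat\delta)$ in total variation by at most $|\hat p_\zeta-p_\zeta|\le\exp(-C_2 n^{1/3})$, so the joint TV distance between the batch of $T_2$ post-processed traces and an i.i.d.\ batch of genuine uniform-deletion traces is at most $nT_2\exp(-C_2 n^{1/3})$. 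Choosing $C_2>C_3$ and $C_1>2C_2$ makes this coupling cost at most $\eps/3$; feeding the post-processed traces into the $[m]$-alphabet uniform-deletion algorithm (which succeeds with probability $\ge 1-\eps/3$ from $T_2$ genuine uniform traces at retention $\ge\delta/2$) then reconstructs $\bfx$ with overall probability $\ge 1-\eps$, using a total of $T=T_1+T_2\le\exp(Cn^{1/3})$ traces.

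The main difficulty is the first phase: a single random estimate $\hat p_\zeta$ is reused to post-process all $T_2$ traces, so miscalibration errors are amplified $T_2$-fold and we really do need additive accuracy $\exp(-\Omega(n^{1/3}))$ in $\hat p_\zeta$, not just $1/\mathrm{poly}(n)$. The feature that makes this achievable within the target budget is the two-moment identifiability of the binomial, which supplies the required stretched-exponential accuracy from $T_1$ comparable to $T_2$ in the exponent, inflating the final constant $C=C(\delta,m,\eps)$ only by a multiplicative factor.
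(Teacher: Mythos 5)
Your proposal is correct, and its first half is essentially the paper's own argument: the paper likewise estimates each $p_\zeta$ from the letter counts $Y_t$ via the first two moments of the binomial ($1-p=\mathrm{Var}/\mathbb{E}$, Lemma \ref{prop1}), and likewise equalizes retention by passing each trace through a second, letter-dependent deletion channel with probabilities $\wh p^*/\wh p_\zeta$. Where you genuinely diverge is in how the error from using estimated probabilities is absorbed. The paper stays inside the analytic machinery: its Lemma \ref{prop2} shows that perturbing the retention vector by $\exp(-C_1 n^{1/3})$ moves every coefficient of $\Phi^{\bf y}_{\bf p}-\Phi^{\bf y'}_{\bf p}$ by at most a polynomial multiple of that amount, so the single-bit separation $\exp(-C_2n^{1/3}/2)$ guaranteed by \cite{NaPe16} for the idealized uniform channel survives (with the same sign) for the true, only-approximately-uniform channel, and the Hoeffding test is then rerun by hand over all pairs ${\bf y},{\bf y}'$. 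You instead treat the uniform-deletion reconstructor as a black box and bound the total-variation distance between the batch of post-processed traces and a batch of genuine uniform-deletion traces at retention $\wh\delta$, using $\lvert p_\zeta\wh\delta/\wh p_\zeta-\wh\delta\rvert\le\lvert p_\zeta-\wh p_\zeta\rvert$ per coordinate and subadditivity over the $nT_2$ coordinates; this is a cleaner and more modular argument, since it needs no statement about how the reconstruction algorithm works internally, only that its success probability can change by at most the TV distance of its input. Your explicit sample splitting (estimation batch versus reconstruction batch) also sidesteps a dependence the paper glosses over, namely that it reuses the same traces both to form $\wh p_\zeta$ and in the subsequent test conditioned on those estimates. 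The trade-off is that the paper's coefficient-perturbation route keeps quantitative control of the distinguishing statistic itself (useful if one wants the test to be the explicit single-bit test), whereas your coupling argument requires budgeting the constants so that the accuracy exponent beats the trace-count exponent ($C_2>C_3$, $C_1>2C_2$), which you do correctly.
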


The deletion channel (or variants of it which also allow insertions, substitutions, swaps, etc.) is relevant for the study of mutations and DNA sequencing errors. In the case of mutations several studies have revealed that mutation probabilities vary by location in the genome \cite{esw03,mmh04,swe02,sl15}.

{\bf Outline of the paper:} To prove Theorem \ref{thm.main} we will first derive in Section \ref{sec1} an exact formula expressing the single bit statistics of $\wt{\bf X}$ as the coefficients of a particular polynomial. In Sections \ref{sec2} and \ref{sec3} we prove that this polynomial cannot be too small everywhere on a particular boundary arc of the unit disk. In Section \ref{sec4} we use this to prove that for any two input strings, at least one bit in the trace has a sufficiently different expectation so we are able distinguish the two strings. Theorem \ref{thm1} will be proved by first using the traces to obtain good estimates for the deletion probabilities, and then use a single bit test (with the estimated probabilities) for the traces sent through a second deletion channel.

{\bf Acknowledgements:} We thank Ryan O'Donnell for suggesting the problem of varying deletion probabilities. Most of this work was done while the first and second author were visiting Microsoft Research, and they want to thank Microsoft for the hospitality.

 \section{Preparatory Lemmas}
 
 \subsection{A polynomial identity}
 \label{sec1}

 For ${\bf a }\in\R^n$ define
\begin{equation}
\Psi(w)= \sum_{k=0}^{n-1}a_kp_k \prod_{\ell=0}^{k-1}(p_\ell w+q_\ell).
\end{equation}

 \begin{lemma}\label{lem.1}Let ${\bf{a}}=(a_0,a_1,\dots,a_{n-1})\in\mathbb{R}^n$ and let $\wt{\bf a}$ be the output of the deletion channel with input $\bf{a}$, padded with an infinite sequence of 0's on the right side. Then
 \begin{equation}\label{e.1}
 \mathbb{E}\left(\sum_{j\geq 0} \wt a_j w^j\right)=\Psi(w).
 \end{equation}
 \end{lemma}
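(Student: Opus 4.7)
The plan is to realize the random power series on the left-hand side of \eqref{e.1} as an explicit polynomial in $w$ whose coefficients are functions of the Bernoulli deletion variables, and then take expectations term by term using independence.

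Concretely, let $\xi_0, \ldots, \xi_{n-1}$ be independent Bernoulli random variables with $\P[\xi_k=1]=p_k$ and $\P[\xi_k=0]=q_k$, where $\xi_k=1$ encodes that $a_k$ is retained by the channel. The output $\wt{\bf a}$ lists the retained entries in order, so if $a_k$ is retained then it is placed at position $T_k \ldef \sum_{\ell=0}^{k-1}\xi_\ell$ of $\wt{\bf a}$ (0-indexed); otherwise $a_k$ contributes nothing. Therefore, almost surely,
\begin{equation*}
\sum_{j\geq 0}\wt a_j\, w^j \;=\; \sum_{k=0}^{n-1} a_k\, \xi_k\, w^{T_k}.
\end{equation*}

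The next step is to take expectations. For each fixed $k$, the variables $\xi_k$ and $\xi_0, \ldots, \xi_{k-1}$ (which determine $T_k$) are jointly independent, so
\begin{equation*}
\mean\!\left[\xi_k\, w^{T_k}\right] \;=\; \mean[\xi_k]\,\prod_{\ell=0}^{k-1}\mean\!\left[w^{\xi_\ell}\right] \;=\; p_k\,\prod_{\ell=0}^{k-1}(p_\ell w + q_\ell),
\end{equation*}
using $\mean[w^{\xi_\ell}] = p_\ell w + q_\ell$. Summing over $k$ recovers $\Psi(w)$, establishing \eqref{e.1}.

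Since $w$ is a formal indeterminate here, no convergence or integrability issue arises beyond the trivial bound $|T_k|\leq n$ and finiteness of the outer sum; interchanging $\sum_{j\geq 0}$ and $\mean$ is automatic because only finitely many $\wt a_j$ are nonzero. The only mildly subtle point is the bookkeeping of the position index, i.e.\ verifying $T_k = \sum_{\ell<k}\xi_\ell$ rather than $\sum_{\ell\leq k}\xi_\ell - 1$ on the event $\{\xi_k=1\}$; this is exactly the rewrite that makes the product start at $\ell=0$ and end at $\ell=k-1$, matching the form of $\Psi$. Beyond this, the proof is a direct calculation with no real obstacle.
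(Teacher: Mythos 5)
Your proof is correct and follows essentially the same route as the paper: both decompose the trace by the source index $k$, note that a retained $a_k$ lands at position given by the number of retained predecessors, and then use independence to factor $\mathbb{E}\bigl(w^{\sum_{\ell<k}\xi_\ell}\bigr)=\prod_{\ell=0}^{k-1}(q_\ell+p_\ell w)$; your version merely makes the retention indicators and the a.s.\ identity $\sum_{j\geq 0}\wt a_j w^j=\sum_k a_k\xi_k w^{T_k}$ explicit before taking expectations, which is a cosmetic rather than substantive difference.
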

 \begin{proof}
 The expectation on the left side of \eqref{e.1} can be written as 
\begin{equation}\label{e.2}
 \sum_{k=0}^{n-1}a_kp_k \sum_{j=0}^{k-1} w^j \mathbb{P}\left(\sum_{\ell=0}^{k-1}B_\ell=j\right),
 \end{equation}
 where $B_\ell$ for $\ell=0,\dots, n-1$ are independent Bernoulli$(p_\ell)$-distributed random variables. 
 Observe that
 \begin{equation}\label{e.3}
 \sum_{j=0}^{k-1} w^j \mathbb{P}\left(\sum_{\ell=0}^{k-1}B_\ell=j\right)
 =
 \mathbb{E}\left(w^{\sum_{\ell=0}^{k-1}B_\ell}\right)
 =
 \prod_{\ell=0}^{k-1}(q_\ell+p_\ell w),
 \end{equation}
 where we used independence in the last step. Plugging \eqref{e.3} into \eqref{e.2} concludes the proof of Lemma \ref{lem.1}.
 \end{proof}
  
 Next, we will establish that (perhaps after a change of variables) the function $\vert\Psi(w)\vert$ on a small arc of the unit disc is not to small. We use tools from standard complex analysis (inspired by \cite{BoEr97}).

\subsection{The deletion probabilities satisfy a monotonicity property}
\label{sec2}
 \begin{lemma}\label{lem.2}
 	Assume that $a_0=1$, $a_k\in[-1,1]$ for $k\geq 1$, and that there is some $\delta\in(0,1/10)$ such that $p_0>\frac{p_k}{2}+\delta$ and $p_k>\delta$ for all $k>0$. Define the arc $\gamma_L=\{z=e^{i\theta}, \frac{-\pi}{L}\leq \theta \leq \frac{\pi}{L} \}$.  Then there is a constant $c>0$ depending only on $\delta$ such that $\max_{\gamma_L}\vert \Psi(w)\vert > e^{-cL}$ for all $L\geq 1$.
 \end{lemma}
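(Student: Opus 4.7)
The structural identity driving the proof is $\Psi(w_0) = p_0$ at $w_0 := -q_0/p_0$: since $\phi_0(w_0) = p_0 w_0 + q_0 = 0$, every summand in the definition of $\Psi$ with $k \geq 1$ carries the factor $\phi_0(w_0)$ and vanishes, leaving $\Psi(w_0) = a_0 p_0 = p_0$. The hypothesis also forces $p_0 > 3\delta/2$ (for any $k \geq 1$, $p_0 > p_k/2 + \delta > \delta/2 + \delta$), so we begin from a single point at which $|\Psi|$ is bounded below by a constant depending only on $\delta$.

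The plan is to transport this one-point lower bound to the whole arc $\gamma_L$ by complex analysis. First I would apply the affine substitution $z = \phi_0(w) = p_0 w + q_0$: this sends $w_0 \mapsto 0$, $w = 1 \mapsto z = 1$, the closed unit disk onto the closed disk $D := \{|z - q_0| \leq p_0\}$ (internally tangent to $\partial\D$ at $z = 1$), and $\gamma_L$ onto an arc $\gamma_L' \subset \partial D$ of angular width $2\pi/L$ about $z = 1$. The polynomial $\Psi$ becomes a polynomial $\wh\Psi$ of the same degree with $\wh\Psi(0) = p_0$ and $\wh\Psi(z) - p_0 = z\cdot\wh S(z)$. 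When $p_0 \geq 1/2$ the origin lies in $D$, and I apply the two-constants (Hadamard--Nevanlinna) theorem on $D$; when $p_0 < 1/2$, the same idea is run on $\wh\C\setminus D$, using the controlled polynomial growth of $\wh\Psi$ at infinity.

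Concretely the two-constants inequality reads
\[
\log p_0 = \log|\wh\Psi(0)| \;\leq\; \omega\,\log\max_{\gamma_L'}|\wh\Psi| + (1-\omega)\,\log M,
\]
with $M := \sup_{\partial D \setminus \gamma_L'}|\wh\Psi|$, and an explicit Poisson-kernel computation at the origin gives $\omega \geq c_1(\delta)/L$. Using $p_0 \geq 3\delta/2$, one rearranges to $\max_{\gamma_L'}|\wh\Psi|\geq (3\delta/(2M))^{L/c_1}\cdot M$, which is at least $e^{-cL}$ with $c = c(\delta)$ as soon as $M$ admits a bound that is polynomial in $L$ and independent of $n$.

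The main obstacle is precisely obtaining this $n$-free bound on $M$: the trivial estimate $M \leq \sum_k p_k \leq n$ would only yield a lower bound shrinking like $n^{-O(L)}$, which for the target regime $L \asymp n^{1/3}$ is far too weak. I would resolve it with a scale-split argument exploiting the coefficient constraint $|a_k|\leq 1$. From the identity $B_k(w) = \mean w^{\sum_{\ell<k}B_\ell}$ of Section~\ref{sec1}, the coefficients of $\Psi$ as a polynomial in $w$ already lie in $[-1,1]$, so $|\Psi(w)| \leq (1-|w|)^{-1}$ on every smaller circle. Splitting the sum defining $\wh\Psi$ at a scale $K \asymp L^2$, the truncation $\wh\Psi_{<K}$ has degree $< K$ and $\ell^1$-coefficient norm $\leq K$, giving a sup-norm bound on $\partial D$ that depends only on $L$ and $\delta$; the tail $\wh\Psi_{\geq K}$ is uniformly small on $\partial D$ via the Taylor estimate $|\phi_\ell(e^{i\theta})|^2 = 1 - 2 p_\ell q_\ell(1 - \cos\theta)$ from Section~\ref{sec1}, together with a linear-in-$k$ lower bound on $Q_k := \sum_{\ell<k} p_\ell q_\ell$. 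Establishing that linear growth is the technical crux: although individual $p_\ell q_\ell$ can be small when $p_\ell$ is close to $1$, the hypothesis $p_0 > p_k/2 + \delta$ prevents too many $p_\ell$'s from clustering near $1$ at once, and the few indices where $p_\ell \approx 1$ produce $\phi_\ell \approx 1$ and can be absorbed into a bounded multiplicative correction. This combinatorial book-keeping is where the weak-monotonicity hypothesis of Lemma~\ref{lem.2} enters essentially.
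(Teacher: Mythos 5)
Your skeleton --- evaluate at $w_0=-q_0/p_0$, where $\Psi(w_0)=a_0p_0=p_0>3\delta/2$, and transfer this one-point bound to $\gamma_L$ by a harmonic-measure/two-constants argument --- is exactly the paper's, and you correctly isolate the crux: an $n$-free bound on $|\Psi|$ on the rest of the boundary. But your resolution of that crux fails. Because your two-constants inequality is run over the full circle $\partial D$ (the image of all of $|w|=1$), you need $M=\sup_{\partial D\setminus\gamma_L'}|\wh\Psi|$ to be subexponential in $n$, and under the hypotheses of the lemma this supremum can genuinely be of order $n$. The combinatorial claim you lean on --- that $p_0>p_k/2+\delta$ prevents many $p_\ell$ from sitting near $1$, forcing $Q_k=\sum_{\ell<k}p_\ell q_\ell$ to grow linearly --- is false: taking $p_\ell=1$ for all $\ell$ (or $p_\ell=1-1/n$) satisfies $p_0>p_k/2+\delta$ and $p_k>\delta$ for every $\delta<1/10$, yet then $Q_k$ stays bounded, $\Psi(w)=\sum_k a_kw^k$, and with $a_k=(-1)^k$ one gets $|\Psi(-1)|=n$; moreover the tail $\wh\Psi_{\geq K}$ at such a point has size $n-K$, so no $n$-independent splitting scale $K\asymp L^2$ can make it small. (The lemma is still true in this example --- it is precisely the Nazarov--Peres/Borwein--Erd\'elyi bound --- but it cannot be reached through a sup bound over the whole far arc.) The paper sidesteps this by never asking for a sup bound near the unit circle away from $\gamma_L$: its domain $\Omega_L$ meets $\partial\D$ only along $\gamma_L$ and tangentially at its endpoints, so the near-circle boundary contributes $\int\log\frac{1}{1-|z|}\,d\mu_{\Omega_L}^{w_0}$, which is finite because of the quadratic tangency $1-|z|\geq c^{-1}(\theta-\theta_0)^2$, using only the crude interior bound $|\Psi(w)|\leq\frac{1}{\delta(1-|w|)}$.

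The fallback you propose for $p_0<1/2$ is also broken. On $\wh\C\setminus D$ the subharmonic function $\log|\wh\Psi|$ grows like $(n-1)\log|z|$, so the two-constants inequality on this unbounded domain must be corrected by $(n-1)$ times the Green's function of $\wh\C\setminus D$ at the evaluation point with pole at infinity (here $(n-1)\log(q_0/p_0)$), an additive error of order $n$ that destroys the bound; ``controlled polynomial growth'' of degree $n-1$ is not $n$-free. (In addition, your interior constant $c_1$ degenerates as $p_0\downarrow 1/2$, since then $0$ lies within distance $2p_0-1$ of $\partial D$.) This is precisely where the weak-monotonicity hypothesis has to be used differently: as in the paper, it guarantees $|p_kw+q_k|\leq 1-\delta/2$ for every $k$ on a thin neighborhood of the real segment $[w_0,1/2]$, hence $|\Psi|\leq 2/\delta$ on a bounded corridor joining $w_0$ to the disk even when $w_0$ lies outside $\D$; the bounded domain built from this corridor together with a disk internally tangent to $\partial\D$ near $1$ is what lets the harmonic-measure transfer go through with constants depending only on $\delta$.
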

 \begin{proof}
 	Throughout the proof $c$ is a positive constant which may depend on $\delta$ and which may change line by line. Define $w_0:=-q_0/p_0$. Let $\Omega'$ be a rectangular domain which contains $w_0$ and $1/2$, has sides parallel to the coordinate axes, and which is such that $|pw+(1-p)|<1-\delta/2$ for all $p\in \{p_0,p_1,\dots \} $ and ${w\in\Omega'}$. Observe that an appropriate domain exists since the required inequality is satisfied for all $w\in[w_0,1/2]$. Furthermore, observe that we may assume $\Omega'$ changes continuously as we vary $w_0$.
 	Let $\Omega=\Omega'\cup B_{1/2}(1/2)$. For $L>100$, let $\Omega'_L$ be the horizontal translation of $\Omega$ by some $t_L\in(0,1/10)$, such that $\gamma_L$ is the intersection of $\Omega'_L$ and the right half of $\partial\D$. There is an $L_0>0$ depending only on $\delta$ such that $w_0\in\Omega'_L$ for all $L>L_0$, and it is sufficient to prove the result of the lemma for $L>L_0$. Let $\Omega_L=\Omega'_L\cap(\D\cup\{z\,:\,\op{Re}(z)<0 \})$. Then $\partial\Omega_L$ is the union of the three disjoint sets $\gamma_L,\gamma_L^2$, and $\gamma_L^3$, where
 	\eqbn
 	\begin{split}
 		\gamma_L^2:=\{z \in \partial\Omega_L \,:\,\op{Re}(z)\leq 1/2+t_L \},\quad
 		\gamma_L^3:=\{z \in \partial\Omega_L\setminus\gamma_L \,:\,\op{Re}(z)>1/2+t_L \}.
 	\end{split}
 	\eqen
 	\begin{figure}
 		\centering
 		\includegraphics[scale=1]{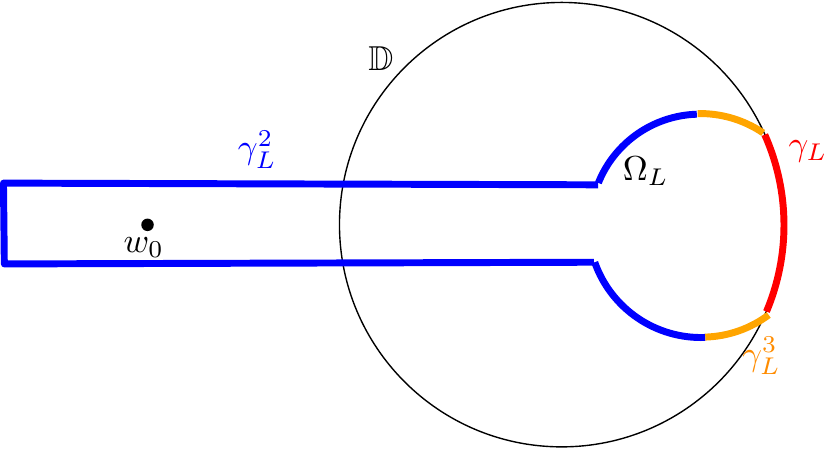}
 		\caption{Illustration of objects defined in the proof of Lemma \ref{lem.2}. The domain $\Omega_L$ is bounded by the colored curves. }
 		\label{fig1}
 	\end{figure}
 See Figure \ref{fig1} for an illustration of $\partial\Omega_L$.	Since $\log|\Psi|$ is subharmonic, and letting $\mu_{\Omega_L}^{w_0}$ denote harmonic measure of $\Omega_L$ relative to $w_0$,
 	\begin{equation}\label{eq2}
 	\log\vert \Psi(w_0)\vert 
 	\leq 
 	\int_{\gamma_L} \log\vert\Psi(z)\vert d\mu_{\Omega_L}^{w_0}(z)
 	+ 
 	\int_{\gamma^2_L} \log\vert\Psi(z)\vert d\mu_{\Omega_L}^{w_0}(z)
 	+ 
 	\int_{\gamma^3_L} \log\vert\Psi(z)\vert d\mu_{\Omega_L}^{w_0}(z).
 	\end{equation}
 	Without loss of generality we assume $|\Psi(z)|<1$ everywhere on $\gamma_L$. For each fixed $w_0$ there is a constant $c_{w_0}$ such that $\mu_{\Omega_L}^{w_0}(\gamma_L) \geq c_{w_0}/L$. Since we assume $\Omega'$ varies continuously as we vary $w_0\in[1-1/\delta,0]$, we can find a $c'>0$ such that $c_{w_0}>c'$ for all $w_0$ this gives
 	\eqbn
 	\begin{split}
 		\int_{\gamma_L} \log\vert\Psi(z)\vert d\mu_{\Omega_L}^{w_0}(z)
 		&\leq 
 		\mu_{\Omega_L}^{w_0}(\gamma_L)\cdot
 		\log\max_{\gamma_L}|\Psi(w)|
 		\leq \frac{c'}{L}\log\max_{\gamma_L}|\Psi(w)|.
 	\end{split}
 	\eqen 
 	Observe that $\log\vert\Psi(z)\vert<c$ for any $z\in \partial \Omega_L\cap\{z\,:\,\op{Re}(z)<1/2+t_L \}$, so
 	\eqbn
 	\int_{\gamma_L^2  } \log\vert\Psi(z)\vert d\mu_{\Omega_L}^{w_0}(z)
 	\leq c.
 	\eqen
 	For any $w\in\D$, 
 	\begin{equation}\label{e.10}
 	\vert\Psi(w)\vert \leq \sum_{k=0}^{n-1} p_k\prod_{\ell=0}^{k-1} \vert q_\ell+p_\ell w\vert
 	\leq  \sum_{k=0}^{n-1}\prod_{\ell=0}^{k-1} (q_k+p_k \vert w\vert)
 	\leq \sum_{k=0}^{n-1}(1-\delta+\delta \vert w\vert)^k 
 	\leq \frac{1}{\delta(1-\vert w\vert)}.
 	\end{equation}
 	It follows that 
 	\begin{equation}\label{eq12}
 		\int_{\gamma_L^3  } \log\vert\Psi(z)\vert d\mu_{\Omega_L}^{w_0}(z) 
 		\leq
 		\log\left(\frac{1}{\delta}\right)
 		+
 		\int_{\gamma^3_L} \log\frac{1}{1-\vert z\vert} d\mu_{\Omega_L}^{w_0}(z).
 	\end{equation}
 	A Brownian motion started at $w_0$ must hit the line segment $\ell_L:=\{ z\in\Omega_L\,:\,\op{Re}(z)=1/4+t_L \}$ before hitting $\gamma_L^3$. Therefore, defining $D_L=B_{1/2}(1/2+t_L)$, the integral in \eqref{eq12} is bounded above by
 	\begin{equation}\label{e.13}
 	c\sup_{w\in\ell_L}\int_{\gamma^3_L} \log\frac{1}{1-\vert z\vert} d\mu_{D_L}^{w}(z).
 	\end{equation}
 	Since $\ell_L$ has distance $>1/4$ from $\gamma_L^3$, the density of $\mu_{D_L}^{w}$ on $\gamma_L^3$ is bounded above by a universal constant. For any $z\in \gamma_L^3$ in the first quadrant we have
 	 $1-\vert z\vert\geq c^{-1} (\theta-\theta_0)^2$, where $\theta\in(-\pi,\pi]$ is the angle for polar coordinates with origin at $1/2+t_L$ and $\theta_0\in[0,\pi/2]$ corresponds to the point of intersection between $\partial D_L$ and $\partial\D$ in the first quadrant (see \cite[equation (4.4)]{NaPe16}). Therefore we can bound \eqref{e.13} from above by
 	\begin{equation}\label{e.16}
 	c\int_{0}^{\pi/2} \log \left \vert \frac{1}{\theta} \right \vert \,d\theta
 	<
 	\infty.
 	\end{equation}
 	We conclude that 
 	\begin{equation}\label{e.12}
 	\int_{\gamma_L^3  } \log\vert\Psi(z)\vert d\mu_{\Omega_L}^{w_0}(z) 
 	<
 	c.
 	\end{equation}
 	Inserting the above estimates into \eqref{eq2}, we get
 	\eqbn
 	\log|p_0| \leq c + \frac{c'}{L} \log\max_{\gamma_L}|\Psi(w)|,
 	\eqen
 	which implies the lemma.
 \end{proof}
 
\subsection{The $2$-periodic case}
\label{sec3}
    Assume without loss of generality that $p<\wt p$. If $z=\wt p w+\wt q$ then
    \begin{equation}\label{eq.per.1}
    \Psi(w)=\wt \Psi(z):= \sum_{k=0}^{n-1}a_kp_k \prod_{\ell=0}^{k-1} (p_\ell \frac{z-\wt q}{\wt p}+q_\ell).
    \end{equation}
    \begin{lemma}\label{lem.4}
  Assume that $a_0=1$, $a_k\in[-1,1]$ for $k\geq 1$, and that $\left(p_\ell\right)_\ell$ is $2$-periodic. If $\vert \wt\Psi(z)\vert<\beta$ everywhere on the the $arc\; \gamma_L=\{z=e^{i\theta}, \frac{-\pi}{L}\leq \theta \leq \frac{\pi}{L} \}$ then there exists $c$ depending on $q$ and $\wt q$ such that $\beta > e^{-cL}$ for all $L\geq 1$.
  \end{lemma}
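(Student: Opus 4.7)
I would adapt the subharmonicity argument from Lemma \ref{lem.2} to $\wt\Psi$. The 2-periodicity (with $p_0 = p$ by \eqref{cond.3}) reduces the factors in \eqref{eq.per.1} to $f_\ell(z) = z$ for $\ell$ odd and $f_\ell(z) = \alpha z + \beta_0$ for $\ell$ even, where $\alpha := p/\wt p \in (0,1)$ and $\beta_0 := (q\wt p - p\wt q)/\wt p = 1 - \alpha$. Both affine factors fix $z = 1$ and map $\ol\D$ to itself.

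\textbf{Base point and subharmonic inequality.} Take $z_0 := -\beta_0/\alpha$, the unique root of $f_0$. Every $k \geq 1$ term of $\wt\Psi(z_0)$ vanishes, leaving $\wt\Psi(z_0) = a_0 p_0 = p$---a lower bound depending only on $q, \wt q$. Next I would construct a simply connected domain $\Omega_L$ containing $z_0$, with boundary decomposing as $\gamma_L \cup \gamma_L^2 \cup \gamma_L^3$ analogously to Lemma \ref{lem.2}, and apply subharmonicity of $\log|\wt\Psi|$:
\[
\log p \;=\; \log|\wt\Psi(z_0)| \;\leq\; \int_{\partial \Omega_L} \log|\wt\Psi|\, d\mu^{z_0}_{\Omega_L} \;\leq\; \frac{c}{L}\log \beta + O(1),
\]
where the final inequality requires (a) $|\wt\Psi(z)| = O_{q,\wt q}(1)$ on $\gamma_L^2$; (b) $\int_{\gamma_L^3}\log|\wt\Psi|\, d\mu^{z_0}_{\Omega_L} \leq O(1)$, which follows via \eqref{e.13}--\eqref{e.16} from the interior bound $|\wt\Psi(z)| \leq C/(1-|z|)$ on $\D$ (derived from $\prod_{\ell<k}|f_\ell(z)| \leq |z|^{\lfloor k/2\rfloor}(\alpha|z|+\beta_0)^{\lceil k/2\rceil}$ together with $1 - r(\alpha r + \beta_0) \asymp 1 - r$ as $r \to 1^-$); and (c) the harmonic-measure lower bound $\mu^{z_0}_{\Omega_L}(\gamma_L) \geq c/L$. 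Rearranging yields $\beta \geq e^{-c' L}$.

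\textbf{Main obstacle.} Unlike in Lemma \ref{lem.2}, the base point $z_0$ may lie outside $\ol\D$: indeed $|z_0| = (q - \wt q)/p > 1$ precisely when $p < \wt p/2$, the very regime not handled by Lemma \ref{lem.2}. In that case no rectangular $\Omega'$ can simultaneously contain $z_0$ and satisfy $|f_1(z)| = |z| < 1 - \delta'/2$ throughout, so the direct adaptation of Lemma \ref{lem.2}'s domain construction breaks down. I expect the resolution to involve a non-convex $\Omega_L$ built from a small disk $B_{r_0}(z_0)$---on which a cancellation estimate gives $|\wt\Psi(z)| \leq C_0(q, \wt q)$, exploiting that $|\alpha z + \beta_0| = \alpha|z-z_0|$ is small near $z_0$ while $|z|$ stays bounded, so the geometric series for $\prod|f_\ell(z)|$ converges---joined by a thin corridor to the lens $\{|z|,\, |\alpha z + \beta_0| < 1 - \delta'/2\}$, which in turn extends to $\gamma_L$. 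Verifying bound (a) on this composite boundary and checking that the corridor does not disrupt (c) is the most technical step.
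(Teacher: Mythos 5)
Your plan is essentially the paper's argument only in the regime $\wt p\le 2p$, where your base point $z_0=-(\wt p-p)/p$ lies in $\ol\D$; the regime you yourself flag as the ``main obstacle'', $\wt p>2p$, is exactly where your proposal stops being a proof, and the fix you sketch cannot be made to work. Write $\alpha=p/\wt p$ and let $Q(z)=z(\alpha z+\beta_0)=\alpha z(z-z_0)$ be the product of one odd and one even factor. The only bound on $|\wt\Psi|$ that is uniform in $(a_k)$ and in $n$ comes from summing a geometric series, so it is available only where $|Q(z)|\le 1-\epsilon$; at a point with $|Q(z)|>1$ the worst case over admissible coefficients is of order $e^{cn}$. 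Now $\{|Q|\le 1\}$ contains $\ol\D$, but its component containing $z_0$ is disjoint from the one containing $\ol\D$ as soon as the critical value satisfies $|Q(z_0/2)|=(1-\alpha)^2/(4\alpha)>1$, i.e.\ $\alpha<3-2\sqrt{2}$; by the mountain-pass principle every curve from $z_0$ into $\ol\D$ must then reach $|Q|\ge(1-\alpha)^2/(4\alpha)>1$. In particular each wall of your corridor joining $B_{r_0}(z_0)$ to the lens contains a piece of fixed, $n$-independent diameter on which only the $e^{cn}$ bound is available, so requirement (a) fails there. This is fatal: Brownian motion from $z_0$ must traverse the corridor to reach $\gamma_L$, so the walls carry harmonic measure bounded below (tending to $1$ as the corridor thins, while thinning the corridor also destroys the lower bound $\mu^{z_0}_{\Omega_L}(\gamma_L)\gtrsim 1/L$ in (c)); the boundary integral then picks up a term of order $n$, which swamps $\frac{c}{L}\log\beta$ for $L\asymp n^{1/3}$ and ruins the conclusion $\beta>e^{-cL}$.

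The idea you are missing is the paper's much simpler resolution: do not insist on evaluating at the root of $f_0$. Since the $\wt p$-factor is exactly $z$, every term with $k\ge 2$ vanishes at $z=0$, so $\wt\Psi(0)=p+a_1(\wt p-p)$ as in \eqref{eq.per.5}; for $a_1\in\{-1,0,1\}$ (the case arising in the application, implicitly used in \eqref{eq.per.6}) this is bounded away from $0$ in terms of $p,\wt p$ whenever $\wt p\neq 2p$, so the base point $0$ handles the entire regime $\wt p>2p$ with the same interior bound \eqref{eq.per.9} and a disk contained in $\D$, tangent to $\partial\D$ at $1$, exactly as in Lemma \ref{lem.2}. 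Only in the degenerate case $\wt p=2p$ (where $a_1=-1$ kills $\wt\Psi(0)$) does the paper pass to the root of the even factor, which is then precisely your $z_0=-1\in\partial\D$, and it handles it by uniform continuity near $-1$, evaluating at $-1+\delta_1$ inside $\D$. So no base point outside $\ol\D$, no non-convex domain, and no corridor is ever needed. Two smaller points: your normalization $p_0=p$ silently drops the case $p_0=\wt p$, which does occur since the lemma is applied to the tail of the string starting at $k^*$ (that case is trivial: $f_0(0)=0$ gives $\wt\Psi(0)=\wt p$, the paper's Case 1); and your evaluation at the root of $f_0$ does have the merit of requiring no discreteness of $a_1$, but only where that root lies in $\ol\D$, i.e.\ $\wt p\le 2p$.
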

\begin{proof}
We consider the following cases.
\begin{itemize}
\item[ Case 1)] $p_0=\wt p$.

Then \eqref{eq.per.1} gives
 \begin{equation}\label{eq.per.3}
 \wt \Psi(z)= a_0\wt p+ a_1p z+ a_2\wt p z\left(p\frac{z-\wt q}{\wt p}+q\right)+\dots.
    \end{equation}
    Setting $z=0$ we get $ \wt \Psi(0)=\wt p>0$.
  \item[ Case 2)]$p_0= p$.
  Then \eqref{eq.per.1} gives
 \begin{equation}\label{eq.per.4}
 \wt \Psi(z)= a_0 p+ a_1\wt p \left(p\frac{z-\wt q}{\wt p}+q\right)+ a_2 p z\left(p\frac{z-\wt q}{\wt p}+q\right)+\dots.
    \end{equation}
Setting again $z=0$ gives
\begin{equation}\label{eq.per.5}
 \wt \Psi(0)= a_0 p+ a_1\wt p \left(q-\frac{p\wt q}{\wt p}\right)= p+a_1 (\wt p-p).
    \end{equation}
    The expression in \eqref{eq.per.5} can take three values
    \begin{equation}\label{eq.per.6}
 \wt \Psi(0)= \left\{\begin{array}{cc}
 p   &\mbox{ if }a_1=0,\\
 \wt p &\mbox{ if }a_1=1,\\
 -\wt p +2p  &\mbox{ if }a_1=-1.
 \end{array}\right.
 \end{equation}
 
\item[Case 2.1)]If $\wt p\neq 2p$, then we see by \eqref{eq.per.6} that $|\Psi(0)|$ is bounded from below by a constant depending only on $p$ and $\wt p$.
 \item[Case 2.2)] $p_0=p$ and $\wt p=2p$. Observe that in this particular case
 
 \begin{equation}\label{eq.per.7}
 p\frac{z-\wt q}{\wt p}+q=\frac{1}{2}z+\frac{1}{2}.
   \end{equation}
 Hence inserting $z=-1$ into \eqref{eq.per.4} gives
    \begin{equation}\label{eq.per.8}
\wt \Psi(-1)= a_0p>0.
   \end{equation}
   Moreover, $\wt\Psi$ is continuous in some neighborhood of $z=-1$ (uniformly in $a_0,a_1,\dots$). Hence, there exists $\delta_1\in(0,1)$ depending only on $p$ such that $\vert\wt\Psi(1-\delta_1)\vert$ is bounded away from 0.
 
\end{itemize}

Again $\vert\wt \Psi(z)\vert$ for $\vert z\vert \leq 1$ is bounded in terms of a geometric series. Set $p^*=\frac{p}{\wt p}$ and $q^*=1-p^*$. Then
\begin{equation}\label{eq.per.9}
\vert\wt \Psi(z)\vert \leq \frac{1}{p^*(1-\vert z\vert)}.
    \end{equation}
     Now the claim of Lemma \ref{lem.4} follows by letting $\Omega$ be a ball contained in $\BB D$ such that $\partial\Omega$ is tangent to $\partial \D$ at 1 and $-1+\delta_1\in\Omega$, and then proceeding as in Lemma \ref{lem.2} using the subharmonic function $\log|\wt\Psi|$.
\end{proof}
    
\section{Proof of main result: Deletion probabilities varying with position}
\label{sec4}
  \begin{proof}[Proof of Theorem \ref{thm.main}]
  	Throughout the proof all constants should depend only on $\delta$ and $m$.
  Let ${\bf x}, {\bf y}\in[m]^n$ be two different strings. We first consider the case $m=2$. Let ${\bf a}={\bf x}-{\bf y}$. By Lemma \ref{lem.1},
  \begin{equation}\label{e.19}
 \sum_{j=0}^{n-1} \mathbb{E}\left(\widetilde X_j -\widetilde Y_j\right) w^j=\sum_{k=0}^{n-1} a_kp_k \prod_{\ell=0}^{k-1} (q_\ell+p_\ell w).
  \end{equation}
  Let $k^*= \min\{ k\,:\, a_k \neq 0\} $. Then the right side of \eqref{e.19} is equal to 
  \begin{equation}\label{e.20}
  \prod_{\ell<k^*} (q_\ell+p_\ell w)\left(p_{k^*}a_{k^*} +\sum_{k=k^*+1}^{n-1} a_kp_k \prod_{\ell=k^*}^{k-1} (q_\ell+p_\ell w)\right).
  \end{equation}
  
Let $L\in \mathbb{N}$. We first consider deletion probabilities satisfying Assumption (i). First, we verify that for $w\in \gamma_L$ 
  \begin{equation}\label{e.21}
 \left\vert \prod_{\ell<k^*} (q_\ell+p_\ell w) \right \vert \geq \exp(-nC_{2}/L^2),  
  \end{equation}
for some universal constant $C_{2}$.  To see that observe that by writing $w=\cos(\theta) + i \sin(\theta)$, for any $0<p<1$,
    \begin{eqnarray}\label{i.det10}
  \vert pw+(1-p)\vert^2&=& 1-2(1-p)p\left(1-\cos(\theta)\right) \nonumber\\&\geq& 1-(1-p)p\theta^2+ O(\theta^4)\nonumber\\
  &=&\exp\left( -(1-p)p\theta^2+O(\theta^4) \right),
  \end{eqnarray}
  where we used the series expansion of $\cos$. Since $\theta \in \gamma_L$, \eqref{e.21} follows. Define
  \begin{equation}\label{e.22}
A(w)=p_{k^*}a_{k^*} +\sum_{k=k^*+1}^{n-1} a_kp_k \prod_{\ell=k^*}^{k-1} (q_\ell+p_\ell w).
\end{equation}
By Lemma \ref{lem.2} there exists $w_1\in \; \gamma_L$ such that $\vert A(w_1)\vert \geq e^{-cL}$. Hence, taking absolute values in \eqref{e.19} gives for $w\in \gamma_L$,
\begin{eqnarray}\label{i.det1}
\sum_{j\geq 0} \left\vert\mathbb{E}\left(\widetilde X_j -\widetilde Y_j\right) \right\vert& \geq& \left\vert \prod_{\ell<k^*} (q_\ell+p_\ell w_1) \right \vert \vert A(w_1)\vert\\
&\geq& \exp(-nC_{2}/L^2) e^{-cL}.
\end{eqnarray}

To approximately maximize the term on the right side of \eqref{i.det1} we choose $L$ of order $n^{1/3}$ and obtain that there is a constant $C_{3}>0$ such that
\begin{equation}\label{i.det2}
\sum_{j\geq 0} \left\vert\mathbb{E}\left(\widetilde X_j -\widetilde Y_j\right) \right\vert \geq \exp\left(-C_{3}n^{1/3}\right).
\end{equation}
We now conclude the proof similarly as the proof of \cite[Theorem 1.1]{NaPe16} (see the argument starting with equation (2.4) of that paper). We first observe that for some $j\geq 0$ and $C_4>0$ we have $\left\vert\mathbb{E}\left(\widetilde X_j -\widetilde Y_j\right) \right\vert\geq \exp(-C_4n^{1/3})$. By Hoeffding's inequality this allows us to test whether our bit sequence is more likely to equal ${\bf x}$ or ${\bf y}$; in case our string equals either $\bf x$ or $\bf y$ our test fails with probability at most $\exp(-C_5n^{1/3})$. Repeating this for all possible pairs of bit sequences ${\bf x}, {\bf y}\in\{0,1 \}^n$, we can determine the original bit string with high probability as $n\rta\infty$. By increasing the constant $C$ appearing in the statement of the theorem the result holds for any $n$. See \cite{NaPe16} for a more detailed argument.

For $m\neq 2$ we proceed similarly. Let ${\bf x}, {\bf y}\in[m]^n$ for $m\neq 2$.  For each fixed $\zeta\in[m]$, we define $\wt {\bf x}$ and $\wt {\bf y}$ to be equal to ${\bf x}$ and ${\bf y}$, respectively, except that we replace $x_k$ by 1 if $x_k=\zeta$, and we replace $x_k$ by 0 if $x_k\neq \zeta$. Using the above procedure we can find all $k$ such that $x_k=\zeta$ in the original string. Repeating for all $\zeta\in[m]$ we determine the original string.

For deletion probabilities satisfying Assumption (ii) let 
\begin{equation}\label{e.22}
A(z)=p_{k^*}a_{k^*} +\sum_{k=k^*+1}^{n-1} a_kp_k \prod_{\ell=k^*}^{k-1} \left(\frac{p_\ell}{\wt p}z + q_\ell-\frac{p_\ell\wt q}{\wt p}\right).
\end{equation}
By Lemma \ref{lem.4} there exists $z_0\in \; \gamma_L$ such that $\vert A(z_0)\vert \geq e^{-cL}$.
Similarly as above, for $z\in \gamma_L$ and $w=\frac{z-\wt q}{\wt p}$,
  \begin{equation}\label{iii.det1}
 \left\vert \prod_{\ell<k^*} (q_\ell+p_\ell w) \right \vert \geq \exp(-nC_6/L^2),  
  \end{equation}
  and 
  \begin{equation}\label{iii.det4}
  \vert w\vert \geq \exp\left(-C_7/L^2\right).
  \end{equation}
 
Taking again absolute values in \eqref{e.19}, this gives for $z\in \gamma_L$, 
\begin{eqnarray}\label{ii.det3}
\sum_{j\geq 0} \left\vert\mathbb{E}\left(\widetilde X_j -\widetilde Y_j\right) \right\vert \vert w\vert^j & \geq& \left\vert \prod_{\ell<k^*} (q_\ell+p_\ell w) \right \vert \vert A(z)\vert \nonumber \\
&\geq& \exp(-nC_6/L^2) e^{-cL},
\end{eqnarray}
 Using \eqref{iii.det4}, this gives that $\left\vert\mathbb{E}\left(\widetilde X_j -\widetilde Y_j\right) \right\vert\geq \exp(-C_8n^{1/3})$ for some $j\geq 0$ and $C_8>0$. We conclude the proof as above. 
\end{proof}

\section{Proof of main result: Deletion probabilities varying with letter}

To prove Theorem \ref{thm1}, we first observe that the theorem is immediate from \cite{NaPe16} and \cite{DOS16} in the case where the deletion probabilities $q_0,\dots,q_{m-1}$ are known.  This follows since we can send the traces through a second deletion channel, where each letter $\zeta\in[m]$ is retained with probability $p_\zeta^{-1}\min_{\zeta'\in[m]} p_{\zeta'}$. The traces obtained in the second deletion channel can be obtained directly from ${\bf x}$ sent through a single deletion channel with constant retention probability $\min_{\zeta\in[m]} p_{\zeta}$, and ${\bf x}$ can therefore be reconstructed with $\exp( O(n^{1/3}) )$ traces. For the case of unknown deletion probabilities, we show in Lemma \ref{prop1} that we can obtain good estimates for the deletion probabilities by studying the traces. Then we use Lemma \ref{prop2} to argue that these estimates are sufficiently good, so that the single bit test still works when we use our estimated values for the deletion probabilities.

\begin{lemma}
	Consider the setting of Theorem \ref{thm1}, where we assume the deletion probabilities are unknown, and that each letter in $[m]$ appears at least once in ${\bf x}$. Given any $\delta,C_1>0$, we can find a $C_0>0$ depending only on $\delta$, $m$, and $C_1$, such that if we have at least $T=\lceil\exp(C_0 n^{1/3})\rceil$ traces, then we can use the traces to find an estimate $\wh p_\zeta$ for each $p_\zeta$ satisfying
	\eqb
	\P\left[ \max_{\zeta\in[m]} |\wh p_\zeta-p_\zeta|> \exp(-C_1 n^{1/3}) \right]<\delta.
	\label{eq4}
	\eqe
	\label{prop1}
\end{lemma}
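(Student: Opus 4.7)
The plan is to use the method of moments. For each $\zeta\in[m]$, let $M_\zeta^{(t)}$ denote the number of occurrences of $\zeta$ in the $t$-th trace. Since the retention of each of the $N_\zeta\geq 1$ copies of $\zeta$ in ${\bf x}$ is an independent Bernoulli$(p_\zeta)$ event, $M_\zeta^{(t)}\sim\op{Bin}(N_\zeta,p_\zeta)$. Thus its mean $\mu_\zeta:=N_\zeta p_\zeta$ and variance $\sigma_\zeta^2:=N_\zeta p_\zeta(1-p_\zeta)$ satisfy the key identity $p_\zeta=1-\sigma_\zeta^2/\mu_\zeta$, which expresses $p_\zeta$ in terms of the first two moments of $M_\zeta$ alone, independently of the unknown $N_\zeta$.

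\textbf{Steps.} First, I form the sample moments $\wh\mu_\zeta:=T^{-1}\sum_{t=1}^T M_\zeta^{(t)}$ and $\wh E_\zeta:=T^{-1}\sum_{t=1}^T (M_\zeta^{(t)})^2$, and set $\wh\sigma_\zeta^2:=\wh E_\zeta-\wh\mu_\zeta^2$ and $\wh p_\zeta:=1-\wh\sigma_\zeta^2/\wh\mu_\zeta$. Second, because $M_\zeta^{(t)}\in[0,n]$, Hoeffding's inequality yields $\P[|\wh\mu_\zeta-\mu_\zeta|>\eta]\leq 2\exp(-2T\eta^2/n^2)$ and $\P[|\wh E_\zeta-\mathbb{E}[(M_\zeta^{(t)})^2]|>\eta]\leq 2\exp(-2T\eta^2/n^4)$ for any $\eta>0$. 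Third, a short error-propagation computation through the smooth map $(\mu,\sigma^2)\mapsto 1-\sigma^2/\mu$, using the lower bound $\mu_\zeta\geq\delta$ (which holds because $N_\zeta\geq 1$ and $p_\zeta\geq\delta$), shows that on the joint event where both estimators deviate by at most $\eta\leq\delta/2$, one has $|\wh p_\zeta-p_\zeta|\leq Kn^2\eta/\delta^2$ for an absolute constant $K$. Fourth, setting $\eta:=\exp(-(C_1+2)n^{1/3})$ makes the right side at most $\exp(-C_1 n^{1/3})$ once $n$ is large enough; then any $C_0>2(C_1+2)$ makes $T\eta^2/n^4$ comfortably exceed $\log(4m/\delta)$, so a union bound over the $2m$ concentration events finishes the proof for large $n$, with the finitely many small values of $n$ absorbed into $C_0$.

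\textbf{Main obstacle.} I do not foresee a deep technical obstacle: the lemma reduces to Hoeffding concentration plus error propagation. The one subtle point is keeping the denominator $\wh\mu_\zeta$ bounded away from zero, and this is precisely what the hypothesis ``each letter of $[m]$ appears at least once in ${\bf x}$'' buys us, by forcing $\mu_\zeta\geq\delta$. Without that hypothesis $N_\zeta$ could vanish and the identity $p_\zeta=1-\sigma_\zeta^2/\mu_\zeta$ would become useless; letters that never appear in any trace (the ``missing-letter'' case) can simply be detected and removed from consideration when applying the lemma in the proof of Theorem~\ref{thm1}.
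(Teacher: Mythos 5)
Your proposal is correct and follows essentially the same route as the paper: both estimate, for each letter $\zeta$, the mean and variance of the per-trace count (which is Binomial in the number of occurrences of $\zeta$ in $\bf x$) and recover $p_\zeta$ from the identity $1-p_\zeta=\sigma_\zeta^2/\mu_\zeta$, using the hypothesis that each letter appears at least once (and $p_\zeta\geq\delta$) to keep the denominator bounded below. The only differences are cosmetic — you use the plug-in second-moment estimator with Hoeffding, while the paper uses the unbiased sample variance with Chebyshev-type variance bounds — and your handling of small $n$ requires also shrinking $\eta$ for those finitely many values, not just enlarging $C_0$, a trivial adjustment.
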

\begin{proof}
	Fix $\zeta\in[m]$ and define $p:=p_\zeta$, $r:=|\{k\in[n]\,:\, x_k=\zeta\}|$, $\mu:=rp$, and $v:=rp(1-p)$. Observe that $1-p=v/\mu$. If $Y_t=|\{k\in[|\wt X^t|]\,:\, \wt X^t_k=\zeta \}|$, then $\E[Y_t]=\mu$ and $\Var[Y_t]=v$. We use the following estimates $\wh\mu,\wh v$, and $\wh p$ for $\mu,v$, and $p$, respectively,
	\eqbn
	\wh\mu = \frac 1T \sum_{t=1}^T Y_t,\qquad
	\wh v = \frac {1}{T-1} \sum_{t=1}^T(Y_t-\wh\mu)^2,\qquad
	1-\wh p=\wh v/\wh\mu.
	\eqen
	We have $\E[\wh\mu]=\mu$, $\E[\wh v]=v$, and $\Var[\wh\mu] = v/T<r/T$. Also observe that for a universal constant $C>0$, we have $\Var[\wh v] \leq Cr^4/T$. This means that for an appropriate $C_0>0$, with probability $1-o_n(1)$, we have $|\wh \mu-\mu|,|\wh v-v|<\exp(-C_1 n^{1/3})$. Upon increasing $C_0$ and using the definition of $\wh p$, this gives that $|\wh p-p|<\exp(-C_1 n^{1/3})$ with probability $1-o_n(1)$, which implies the lemma.
\end{proof}

For ${\bf x}=(x_0,\dots,x_{n-1})\in\R^n$ and ${\bf p}=(p_0,\dots,p_{m-1})\in(0,1]^m$, define
\begin{equation}
\Phi_{{\bf p}}^{\bf x}(w)= \sum_{k=0}^{n-1}x_kp_{x_k} \prod_{\ell=0}^{k-1}(p_{x_\ell} w+q_{x_\ell}).
\end{equation}
\begin{lemma}
	For any $C_2>0$, we can find a $C_1>0$ such that the following holds for any $p\in(0,1]$. Let ${\bf p}=(p,\dots,p)\in(0,1]^m$ and ${\bf{ p}'}=( p'_0,\dots, p'_{m-1})\in(0,1]^m$ satisfy
	\eqb
	|p-p'_\zeta|< \exp(-C_1 n^{1/3}),\qquad\forall \zeta\in[m].
	\label{eq3}
	\eqe
	Then for each ${\bf x}\in[m]^n$, each coefficient in the polynomial $\Phi^{\bf x}_{{\bf p}}(w)-\Phi^{\bf x}_{{\bf{p'}}}(w)$ has magnitude less than $\exp(-C_2 n^{1/3})$.
	\label{prop2}
\end{lemma}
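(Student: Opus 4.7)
The plan is to reduce coefficient-wise control to uniform control on the unit circle and then exploit the elementary bound $|pw+(1-p)|\leq 1$ for $|w|\leq 1$. For any polynomial $P(w)=\sum_j c_j w^j$, the Cauchy formula $c_j=(2\pi i)^{-1}\oint_{|w|=1}P(w)w^{-j-1}\,dw$ gives $|c_j|\leq \max_{|w|=1}|P(w)|$. Applied to $P=\Phi^{\bf x}_{\bf p}-\Phi^{\bf x}_{{\bf p}'}$, it suffices to prove
$$\max_{|w|=1}\bigl|\Phi^{\bf x}_{\bf p}(w)-\Phi^{\bf x}_{{\bf p}'}(w)\bigr|\leq \exp(-C_2 n^{1/3}).$$

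Fix $|w|=1$, set $\eps:=\exp(-C_1 n^{1/3})$, and write $A_\ell(w):=p_{x_\ell}w+q_{x_\ell}$ and $A'_\ell(w):=p'_{x_\ell}w+q'_{x_\ell}$. Since $A_\ell(w)$ is a convex combination of $w$ and $1$, one has $|A_\ell(w)|\leq 1$ on the closed unit disk, and similarly $|A'_\ell(w)|\leq 1$. Using $q_{x_\ell}=1-p_{x_\ell}$ and $q'_{x_\ell}=1-p'_{x_\ell}$, we have $A_\ell(w)-A'_\ell(w)=(p_{x_\ell}-p'_{x_\ell})(w-1)$, whose modulus on $|w|=1$ is at most $2\eps$. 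The standard telescoping inequality $|\prod_\ell a_\ell-\prod_\ell b_\ell|\leq \sum_\ell |a_\ell-b_\ell|$ for $|a_\ell|,|b_\ell|\leq 1$ then yields $\bigl|\prod_{\ell<k}A_\ell-\prod_{\ell<k}A'_\ell\bigr|\leq 2k\eps$.

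Splitting each summand of $\Phi^{\bf x}_{\bf p}-\Phi^{\bf x}_{{\bf p}'}$ as
$$x_k\bigl(p_{x_k}-p'_{x_k}\bigr)\prod_{\ell<k}A_\ell\;+\;x_k p'_{x_k}\Bigl(\prod_{\ell<k}A_\ell-\prod_{\ell<k}A'_\ell\Bigr)$$
and estimating its modulus by $m(1+2k)\eps$, summation over $k=0,\dots,n-1$ gives $\max_{|w|=1}|\Phi^{\bf x}_{\bf p}-\Phi^{\bf x}_{{\bf p}'}|\leq mn^2\eps=mn^2\exp(-C_1 n^{1/3})$. Since $\log(mn^2)=o(n^{1/3})$, choosing $C_1$ sufficiently larger than $C_2$ (depending only on $C_2$ and $m$) absorbs the polynomial prefactor into the exponential and yields the stated bound; any finitely many small values of $n$ are handled by further increasing $C_1$.

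There is no genuine obstacle here; the only small subtlety is recognizing that Cauchy-type coefficient extraction is cheap precisely because the relevant polynomial is naturally bounded by $1$ on the closed unit disk, which is what makes the telescoping estimate on the circle translate directly into a bound on each coefficient. Notice that the hypothesis ${\bf p}=(p,\dots,p)$ plays no role in this argument: the same proof applies to any pair ${\bf p},{\bf p}'\in(0,1]^m$ with $\|{\bf p}-{\bf p}'\|_\infty\leq \eps$.
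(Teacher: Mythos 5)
Your proof is correct, but it follows a different route from the paper. The paper works coefficient by coefficient: it observes that the coefficient of $w^j$ in $\prod_{\ell<k}(pw+q)$ is the binomial point probability $\P[B(k,p)=j]$, and bounds the coefficient of $w^j$ in $\Phi^{\bf x}_{{\bf p}}-\Phi^{\bf x}_{{\bf p}'}$ by $m\sum_{k}\max_{|\wh\eps|\leq\eps}\big|\P[B(k,p+\wh\eps)=j]-\P[B(k,p)=j]\big|+\eps$, asserting that this is a polynomial multiple of $\eps$; the conclusion then follows exactly as in your last step by taking $C_1$ sufficiently larger than $C_2$. You instead bound $\sup_{|w|=1}\big|\Phi^{\bf x}_{{\bf p}}(w)-\Phi^{\bf x}_{{\bf p}'}(w)\big|$ via the elementary facts $|pw+q|\leq 1$ on $\overline{\D}$ and $|A_\ell(w)-A'_\ell(w)|=|p_{x_\ell}-p'_{x_\ell}|\,|w-1|\leq 2\eps$, combined with the telescoping inequality for products bounded by $1$, and then transfer the circle bound to coefficients by Cauchy's estimate. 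Both arguments produce a $\mathrm{poly}(n)\cdot\eps$ bound and finish identically. What your version buys: it avoids any estimate on perturbed binomial point probabilities (which the paper leaves unproved, and whose reduction to a genuine binomial is slightly informal, since with letter-dependent $p'_\zeta$ the products are Poisson-binomial rather than binomial generating functions), and it handles the general pair ${\bf p},{\bf p}'$ with no role for the constancy of ${\bf p}$, as you note. What the paper's version buys: it stays entirely at the level of individual coefficients and probabilistic interpretation, needing no contour integration. Your bookkeeping ($mn^2\eps$, then absorbing $\log(mn^2)=o(n^{1/3})$ by enlarging $C_1$ depending only on $C_2$ and $m$) is accurate.
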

\begin{proof}
	To simplify notation, let $\eps=\exp(-C_1 n^{1/3})$. The magnitude of the coefficient of $w^j$ in  $\Phi^{\bf x}_{{\bf p}}(w)-\Phi^{\bf x}_{{\bf{p'}}}(w)$, is bounded above by the following, with $B(k,p)$ denoting a binomial random variable 
	\eqbn
	m\sum_{k=0}^{n-1} \max\Big\{ 
	\big|\P[B(k,p+\wh\eps)=j] 
	-
	\P[B(k,p)=j] \big| 
	\,:\, 
	\wh\eps\in [ -\eps,\eps] 
	\Big\}+\eps,
	\eqen
	which is bounded by a polynomial multiple of $\eps$.
\end{proof}

\begin{proof}[Proof of Theorem \ref{thm1}]
	Throughout the proof we consider constants $C_0,C_1,C_2$ which may depend on $\delta$, $p^*:=\min_{\zeta\in[m]} p_\zeta$, and $m$, but which are independent of all other parameters.
	Let $T=\lceil\exp(C_0 n^{1/3})\rceil$ for some $C_0>0$ which will be determined later. Consider $T$ traces ${\bf{\wt X}}^{(1)},\dots,{\bf{\wt X}}^{(T)}$ obtained by sending ${\bf x}$ through the deletion channel considered in the statement of the theorem. Using the $T$ traces, we find an estimate $\wh p_\zeta$ for each $p_\zeta$ as described in Lemma \ref{prop1}, and let $\wh p^*=\min_{\zeta\in[m]} \wh p_\zeta$. 
	
 	Send each trace ${\bf{\wt X}}^{(t)}$ through a second deletion channel, so that we obtain traces ${\bf{\check X}}^{(1)},\dots,{\bf{\check X}}^{(T)}$. In the second deletion channel the letter $\zeta$ is retained with probability $\wh p^*/\wh p_\zeta$. Observe that each trace ${\bf{\check X}}^{(t)}$ can be obtained from ${\bf {x}}$ by considering a deletion channel in which the letter $\zeta$ is retained with probability $p^*_\zeta:=p_\zeta\wh p^*/\wh p_\zeta$. In particular, if our estimate $\wh p_\zeta$ for $p_\zeta$ is good for all $\zeta$, then each letter is retained with approximately the same probability $p^*$. Define ${\bf p}^*=(p^*_0,\dots,p^*_{m-1})\in(0,1]^m$ and ${\bf \wh p}^*=(\wh p^*,\dots,\wh p^*)\in(0,1]^m$; the first string represents the actual (unknown) retention probabilities, and while the second string represents our estimated retention probabilities.
	
	Given strings ${\bf y},{\bf y}'\in[m]^n$, let $\check {\bf Y}$ (resp.\ $\check {\bf Y}'$) denote a string obtained by sending $\bf y$ (resp.\ $\bf y'$) through the two deletion channels described above, i.e., the letter $\zeta$ is retained with probability $p^*_\zeta$. We first assume $m=2$. By Lemma \ref{lem.1},
	\eqb
	\begin{split}
	\sum_{j=0}^{n-1} \mathbb{E}&\left(\check Y_j -\check Y'_j\right) w^j
	=
	\Phi^{\bf y}_{{\bf { p^*}}}(w) - \Phi^{\bf y'}_{{\bf { p^*}}}(w)\\
	&=
	\Big(\Phi^{\bf y}_{{\bf {\wh p^*}}}(w) - \Phi^{\bf y'}_{{\bf {\wh p^*}}}(w)\Big) 
	+ 
	\Big(\Phi^{\bf y}_{{\bf { p^*}}}(w) - \Phi^{\bf y}_{{\bf {\wh p^*}}}(w)  \Big) 
	-
	\Big(\Phi^{\bf y'}_{{\bf { p^*}}}(w) - \Phi^{\bf y'}_{{\bf {\wh p^*}}}(w)  \Big). 
	\end{split}
	\label{eq5}
	\eqe
	Let 
	$$
	E(C_0,C_1) := \left\{ \max_{\zeta\in[m]} |\wh p^*-p^*_\zeta|\geq \exp(-C_1 n^{1/3}) \right\}.
	$$
	By \cite{NaPe16}, there is a $C_2>0$, such that for some $j\in[n]$, the absolute value of the coefficient of $w^j$ in $\Phi^{\bf y}_{{\bf {\wh p^*}}}(w) - \Phi^{\bf y'}_{{\bf {\wh p^*}}}(w)$ is at least $\exp(-C_2 n^{1/3}/2)$. From the proof in \cite{NaPe16} we see that $C_2$ is universal given any lower bound on $\wh p^*$, so on the event $E(C_0,C_1)^c$ and for sufficiently large $n$, we may assume that the constant $C_2$ depends only on $p^*$.
	
	Given $C_2>0$, define $C_1>0$ as in Lemma \ref{prop2}. By Lemma \ref{prop1}, we can find $C_0>0$ such that
	\eqb
	\P\left[ E(C_0,C_1) \right]<\delta/2.
	\label{eq6}
	\eqe
	By Lemma \ref{prop2} and \eqref{eq5}, on $E(C_0,C_1)^c$ and for all sufficiently large $n$, the absolute value of the coefficient of $w^j$ in the polynomial $\Phi^{\bf y}_{{\bf { p^*}}}(w) - \Phi^{\bf y'}_{{\bf { p^*}}}(w)$ is at least $\exp(-2C_2 n^{1/3})$ and of the same sign as the corresponding coefficient in $\Phi^{\bf y}_{{\bf {\wh p^*}}}(w) - \Phi^{\bf y'}_{{\bf {\wh p^*}}}(w)$. Therefore, on the event $E(C_0,C_1)^c$, Hoeffding's inequality gives that with probability at least $1-\exp( -T\exp(-4C_2 n^{1/3})/2 )$, we can determine if our unknown bit string $\bf x$ equals $\bf y$ or $\bf y'$, by considering $({\bf{\check X}}_j^{(t)})_{1\leq t\leq T}$. After possibly increasing $C_0$, we see by a union bound that on $E(C_0,C_1)^c$ and for $m=2$, we can identify ${\bf x}$ with probability at least $1-2^n\exp( -T\exp(-4C_2 n^{1/3})/2 )$.
	
	Given ${\y},{\y}'\in[m]^n$ for $m\neq 2$ we proceed similarly. For each fixed $\zeta\in[m]$ and with ${\bf x}=(x_0,\dots,x_{n-1})$, we first identify the set $A_\zeta:=\{ k\in[n]\,:\,x_k=\zeta \}$. Define $\wh \y$ and $\wh \y'$ to be equal to $\y$ and $\y'$, respectively, except that 
	all $x_k$ such that $x_k=\zeta$ has been replaced by 1, and 
	all $x_k$ such that $x_k\neq \zeta$ has been replaced by 0. Using the approach above with $\wh \y$ and $\wh \y'$, on the event $E(C_0,C_1)^c$ we can determine $A_\zeta$ except on an event of probability $1-2^n\exp( -T\exp(-4C_2 n^{1/3})/2 )$. We repeat the procedure for each $\zeta\in[m]$, and see that on the event $E(C_0,C_1)^c$ we can determine the sets $A_\zeta$, and hence ${\bf x}$, except on an event of probability $1-o_n(1)$. By increasing the constant $C_0$ if necessary, we can reconstruct the string with probability at least $1-\eps$ for any $n$.
\end{proof}

\bibliographystyle{abbrv}
\bibliography{literature}

\end{document}